\newtheorem{thr}{Theorem}
\newtheorem{lem}[thr]{Lemma}
\newtheorem{cor}[thr]{Corollary}
\theoremstyle{definition}
\newtheorem{defn}[thr]{Definition}
\newtheorem{prob}[thr]{Problem}
\newtheorem{compprob}[thr]{Problem}
\theoremstyle{remark}
\journal{arXiv.org}
\def\R{\mathbb{R}}
\begin{document}

\begin{frontmatter}

\title{On tropical and nonnegative \\ factorization ranks of band matrices}

\author{Yaroslav Shitov}
\address{National Research University --- Higher School of Economics, 20 Myasnitskaya ulitsa, 101000, Moscow, Russia}
\ead{yaroslav-shitov@yandex.ru}

\begin{abstract}
Matrix factorization problems over various semirings naturally arise in different contexts of modern pure and applied mathematics. These problems are very hard in general and cause computational difficulties in applications. We give a survey of what is known on the algorithmic complexity of Boolean, fuzzy, tropical, nonnegative, and positive semidefinite factorizations, and we examine the behavior of the corresponding rank functions on matrices of bounded bandwidth. 
We show that the Boolean, fuzzy, and tropical versions of matrix factorization become polynomial time solvable when restricted to this class of matrices, and we also show that the nonnegative rank of a tridiagonal matrix is easy to compute. We recall several open problems from earlier papers on the topic and formulate many new problems.
\end{abstract}

\begin{keyword}
nonnegative matrix factorization, tropical mathematics, fuzzy algebra, band matrices

\MSC 15A23 \sep 15B33 \sep 15B15 \sep 15B05
\end{keyword}
\end{frontmatter}

\section{Introduction}

The problem of matrix factorization over semirings, which is a generalization of the classical rank decomposition problem, has numerous applications in different contexts of mathematics. A notable particular case of this problem is the \textit{nonnegative matrix factorization} (NMF), which provides an important tool for studying various questions both in pure and applied mathematics. In particular, the ability of NMF to extract sparse and meaningful features from nonnegative data vectors leads to important techniques in image processing and text mining~\cite{LS}, computational biology~\cite{Dev}, clustering~\cite{DHS}, and many other problems related to the real-world applications (see~\cite{Gillis} for an extensive survey). A geometrical view on NMF leads to a certain problem with nested polytopes (see~\cite{Yan}), which turns out to be useful for studying the so called \textit{extended formulations} of polytopes and relaxations of linear programming versions of hard problems in combinatorial optimization (see~\cite{FMPTdW}). Several other fields where NMF is useful are statistics~\cite{KRS}, communication complexity~\cite{LS2}, and quantum mechanics~\cite{CR}.

The tropical version of matrix factorization does also have applications in optimization. As shown in~\cite{Barv4}, the Traveling Salesman problem (TSP) can be solved in polynomial time if the corresponding distance matrix is given as a tropical product of an $n\times k$ and $k\times n$ matrices, where $k$ is a constant fixed in advance. (This version of TSP has a natural interpretation and became known as \textit{TSP with $k$ warehouses}, see~\cite{Barv3}.) The paper~\cite{Barv2} presents further examples of hard optimization problems which become solvable in polynomial-time when reduced to matrices represented in the way mentioned above. From point of view of tropical geometry, the factorization rank can be described as the minimum number of points whose tropical convex hull contains the columns of a given matrix, and this characterization may be of independent interest, see~\cite{Dev2, DSS}.

The Boolean version of matrix factorization gives rise to the concept known as \textit{Boolean rank}, which serves as a lower bound for both nonnegative and tropical factorization ranks, see~\cite{FKPT, mycomb}. The Boolean rank is also of independent interest in graph theory, where it arises as the \textit{bipartite dimension} for bipartite graphs, see~\cite{FH}. Another related concept is the \textit{fuzzy rank} of a matrix, which is the factorization rank of a matrix over the fuzzy algebra, and it also valuable in different applications, see~\cite{Cho}.

\section{Matrix factorization and its complexity}

Formally speaking, a \textit{semiring} is a set $R$ equipped with two binary operations $\oplus$, $\odot$ and two distinguished elements $0$, $1$ such that (i) $(R,\oplus,0)$ is a commutative monoid, (ii) $(R,\odot,1)$ is a monoid, (iii) multiplication distributes over addition from both sides, and (iv) one has $0\odot x=x\odot 0=0$ for any $x\in R$. In other words, the elements of a semiring are not required to have additive inverses (and if this is the case, the corresponding semiring becomes a ring). Let us mention the particular examples of semirings relevant in our study:

\noindent the \textit{nonnegative reals} $(\R_+,+,\cdot)$;

\noindent the \textit{tropical semiring} $(\R_+,\max,\cdot)$;

\noindent the \textit{fuzzy algebra} $([0,1],\max,\min)$;

\noindent the \textit{binary Boolean semiring} $(\{0,1\},\max,\cdot)$.

In all of these cases, the conventional zero and one serve as additive and multiplicative identity elements, respectively. (We note that the tropical semiring is usually defined as $(\R\cup\{\infty\},\min,+)$ but we decided to use the isomorphic structure corresponding to the multiplicative notation in order to make the relations to other semirings more explicit, see also~\cite{mysqr, TH}.)

The problem of matrix factorization aims to represent a given matrix as a product of two matrices with a fixed (usually relatively small) inner dimension. We recall that the multiplication of matrices over semirings is defined in the same way as ordinary matrix multiplication but with $+$ and $\cdot$ replaced by the corresponding arithmetic operations $\oplus$ and $\odot$. A concept related to the matrix factorization problem is the factorization rank.

\begin{defn}\label{factrank}
The \textit{factorization rank} of a matrix $A$ over a semiring $S$ is the smallest integer $k$ for which there exist matrices $B\in S^{m\times k}$ and $C\in S^{k\times n}$ satisfying $B\odot C=A$. By convention, the factorization rank of a zero matrix is assumed to be zero.
\end{defn}

In general, factorization ranks are hard to compute. For instance, both the tropical and fuzzy versions of matrix factorization contain the Boolean version as a special case, which is in turn equivalent to the so called \textsc{set basis} problem. The latter one was proved to be NP-complete by Orlin in~\cite{Orlin}, so the above mentioned factorization problems are NP-hard. (As can be seen from~\cite{myfactor}, they fall in NP when restricted to rational matrices, and the corresponding restrictions become NP-complete.) In the above mentioned paper Orlin did also conjecture that the related \textsc{bicontent} problem, also known as \textsc{normal set basis}, is NP-complete, and this fact was proven later (the earliest reference I am aware of is Lemma~3.3 in the paper~\cite{JR} by Jiang and Ravikumar). In a fashion similar to how \textsc{set basis} gives the NP-hardness of the Boolean rank, the result of Jiang and Ravikumar proves the NP-hardness of the nonnegative matrix factorization (see the discussion in~\cite{myCR}). However, it turned out that the NMF problem is unlikely to be NP-complete, as shows the paper~\cite{myuni} determining the true complexity of NMF. Namely, the NMF problem is $\exists\R$-complete, that is, polynomial time equivalent to the determination whether a given system of polynomial equations has a real solution or not. Let us also mention a notable paper by Vavasis~\cite{Vav}, who proves that it is NP-hard to test whether the nonnegative rank of a matrix equals its conventional rank; the $\exists\R$-completeness of this problem is still an open issue.

\begin{prob}
Is it $\exists\R$-complete to check whether the nonnegative rank of a given rational nonnegative matrix equals its conventional rank?
\end{prob}

A special case of the factorization problem that is particularly interesting for applications is when the inner dimension of the factors is small or even bounded above by a constant fixed in advance. This restriction makes the Boolean rank problem easy as it turns to be what is called \textit{fixed parameter tractable}. Namely, it is a standard exercise in parametrized complexity theory that $m\times n$ matrices of Boolean rank $r$ can be detected in time $O(f(r)(m+n)^\theta)$, where $f(r)$ is a computable function depending only on $r$, and $\theta$ is an absolute constant, see Exersice~1.42 in~\cite{pct}. The NMF problem does also become easier when restricted to fixed parameters---for any integer $k$, the paper~\cite{AGKM} gives a polynomial time algorithm detecting matrices of nonnegative rank $k$. However, the exponent of the size of a given matrix in the estimate of the time complexity of the algorithm in~\cite{AGKM} depends on $k$, and the fixed parameter tractability of nonnegative rank remains an open issue.

\begin{prob}\label{nnfpt}
Is nonnegative rank fixed-parameter tractable? In other words, do there exist a function $f:\mathbb{Z}\to\mathbb{Z}$, a constant $\theta\in\mathbb{R}$, and an algorithm that halts in time $f(k)\Lambda^{\theta}$ and decides whether the nonnegative rank of a rational matrix $A$ of total bit length $\Lambda$ is at most $k$?
\end{prob}

There is another concept, known as \textit{positive semidefinite rank}, which is closely related to nonnegative rank. We recall that the positive semidefinite rank of a nonnegative matrix $A$ is the smallest integer $k$ for which there exist $k\times k$ real positive semidefinite matrices $B_1,\ldots,B_m$, $C_1,\ldots,C_n$ satisfying $A_{ij}=\operatorname{tr} (B_iC_j)$ for all $i,j$. The situation with the algorithmic complexity of this invariant is similar to the one for nonnegative rank. Namely, the positive semidefinite rank is $\exists\mathbb{R}$-complete in general but can be computed in polynomial time if given matrices are promised to have ranks bounded by a number fixed in advance, see~\cite{mypsd, mypsd2}. Similarly to Problem~\ref{nnfpt}, the fixed-parameter tractability of positive semidefinite rank is an open question.

The situation with tropical factorization rank is opposite to the one with Boolean and nonnegative ranks. According to the paper~\cite{myfactor}, the problem of detecting matrices with tropical factorization rank equal to any integer $\geqslant7$ fixed in advance is NP-complete. On the other hand, there are polynomial time algorithms detecting matrices with tropical factorization ranks $1, 2, 3$, see~\cite{mycomb}. The author believes that the same problem for rank $4$ is still polynomial time solvable, but he has no idea as to the true complexity of it for ranks $5$ and $6$. Also, the author is not aware of any result on the parametrized complexity of fuzzy ranks.

\begin{prob}
Is fuzzy rank fixed-parameter tractable? Is there, for any integer $k$ fixed in advance, a polynomial time algorithm detecting matrices of fuzzy rank $k$? How hard is it to detect matrices of fuzzy rank three?
\end{prob}

In this paper, we are going to examine the complexity of these factorization problems for \textit{band} matrices, that is, square matrices whose $(i,j)$ entries are zero except when $|i-j|$ is small (namely, bounded by a number fixed in advance). The classes of such matrices include tridiagonal and pentadiagonal matrices, and they often occur in numerical analysis and finite element problems in particular. Many standard linear algebraic procedures work faster when applied to such matrices, and it may be interesting to look at what happens with the factorization ranks in this case. In Section~3, we show that the Boolean, tropical, and fuzzy factorization ranks of band matrices can be found in polynomial time. In Section~4, we examine the complexity of nonnegative rank, but we managed to construct the polynomial time algorithm for tridiagonal matrices only. 
We point out several directions of further research and open problems; in particular, the algorithmic complexity of nonnegative rank for general matrices of bounded bandwidth remains unknown.

\section{Fast fuzzy and tropical factorization of band matrices}

In order to construct a polynomial time algorithm for tropical and fuzzy factorization ranks of band matrices, we need to recall several facts from graph theory. We say that a square matrix $M$ is $k$\textit{-band} if $M_{ij}=0$ whenever $|i-j|>k$. The \textit{bandwidth} of a simple graph $G=(V,E)$ is the minimal integer $k$ for which there exists a labeling $\varphi:V\to\mathbb{Z}$ such that, for any edge $\{u,v\}\in E$, one has $|\varphi(u)-\varphi(v)|\leqslant k$. In other words, the bandwidth is the smallest integer $k$ for which the adjacency matrix of a graph can be made $k$-band by permutation similarities. The following problem will be extensively used for reductions in the considerations of this section. 

\begin{compprob}\label{prhs}(HITTING SET.)

\noindent Given: A bipartite graph $G=(V_1,V_2,E)$ and an integer $k$.

\noindent Question: Are there $k$ vertices in $V_1$ that \textit{dominate} $V_2$? (That is, every vertex in $V_2$ should be adjacent to at least one of the $k$ chosen vertices.)
\end{compprob}

\begin{thr}\label{thrhsp}(See the last sentence of Section~4 in~\cite{MS}.)
For any fixed $b$, the restriction of \textsc{hitting set} to graphs of bandwidth at most $b$ is in P.
\end{thr}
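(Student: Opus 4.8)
The plan is to recognize \textsc{hitting set} as the bipartite domination (equivalently, \textsc{set cover}) problem: we must choose $S\subseteq V_1$ with $|S|\le k$ so that every vertex of $V_2$ has a neighbor in $S$. In general this is NP-hard, so the entire point is to exploit the bandwidth bound. The key structural observation is that a graph of bandwidth at most $b$ has pathwidth at most $b$: fixing a labeling $\varphi$ that realizes the bandwidth and ordering the $N=|V_1|+|V_2|$ vertices as $v_1,\dots,v_N$ accordingly, every edge joins two vertices whose positions differ by at most $b$, so the sliding windows $B_i=\{v_i,\dots,v_{i+b}\}$ form a path decomposition of width $b$. For fixed $b$ such a labeling can itself be computed in polynomial time (for instance by Saxe's dynamic-programming recognition algorithm for bounded bandwidth), so we may assume the ordering is in hand.

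I would then solve the problem by a left-to-right dynamic program along this ordering, maintaining a window consisting of the $b+1$ most recently introduced vertices. A state records, for each $V_1$-vertex currently in the window, a single bit indicating whether it has been placed in $S$, and, for each $V_2$-vertex in the window, a single bit indicating whether it has already been dominated by some neighbor placed in $S$ so far. The transition from step $i$ to $i+1$ introduces $v_{i+1}$: if $v_{i+1}\in V_1$ we branch on whether to add it to $S$, immediately updating the domination bits of its window-neighbors in $V_2$; if $v_{i+1}\in V_2$ we initialize its domination bit from the already-selected $V_1$-neighbors in the window. When a $V_2$-vertex slides out of the window we demand that its domination bit be set, discarding any partial solution that violates this. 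With each state we keep the minimum number of $V_1$-vertices selected so far, and the answer to \textsc{hitting set} is ``yes'' precisely when some terminal state attains value at most $k$.

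Correctness hinges on the single point where the bandwidth bound is genuinely used: when a vertex $v_j$ leaves the window, i.e. after step $j+b$, all of its neighbors lie in positions $\{j-b,\dots,j+b\}$ and have therefore already been introduced and had their selection status decided. Hence the domination bit of a $V_2$-vertex is final at the moment we test it, and a chosen $V_1$-vertex remains in the state long enough to reach every $V_2$-vertex it could possibly dominate. This timing argument, together with the verification that the stored bits suffice to determine all future obligations, is the main thing I expect to have to check carefully; everything else is routine bookkeeping. For the complexity estimate, each window carries at most $2^{b+1}$ states, there are $N$ steps, and each transition costs $O(b)$ neighbor updates, so the procedure runs in time $O(2^{b}\,b\,N)$ plus the cost of producing the layout, which is polynomial for every fixed $b$.
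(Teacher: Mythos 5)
Your proof is correct, and it is essentially the standard argument behind the cited result: the paper itself offers no proof of Theorem~\ref{thrhsp} beyond the reference to Monien and Sudborough, whose treatment of bandwidth-constrained problems rests on exactly the kind of sliding-window dynamic programming over the linear layout that you describe (selection bits for $V_1$-vertices and domination bits for $V_2$-vertices in a window of width $b+1$, with the bandwidth bound guaranteeing that a vertex's obligations are settled before it leaves the window). Your handling of the two non-routine points --- obtaining the layout via Saxe's algorithm for fixed $b$, and the timing argument showing the window state suffices --- is sound, so there is nothing to add.
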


Let $M\in\R^{I\times J}$ be a matrix; the subset $S(M)\subset I\times J$ of all non-zero positions in $M$ is to be called the \textit{support} of $M$. We will write $M_1\leqslant M_2$ if matrices $M_1, M_2$ have equal sizes and each entry of $M_1$ does not exceed the corresponding entry of $M_2$. We say that a subset $\alpha\subset S(M)$ is $t$-admissible (or $f$-admissible, respectively) if there is a matrix $Q$ which has tropical (or fuzzy, respectively) factorization rank one, satisfies $Q\leqslant M$, and the equality $Q_{ij}=M_{ij}$ holds if and only if $(i,j)\in\alpha$.

\begin{lem}\label{lemadm}
Let $M\in\R^{n\times n}$ be a $k$-band matrix, $i,j\in\{1,\ldots,n\}$. All the $f$-admissible and $t$-admissible subsets of $M$ that contain $(i,j)$ can be determined within the number of arithmetic operations bounded by a function of $k$. Also, the differences between indexes of rows (columns) for any pair of entries in one such subset are bounded by a function of $k$.
\end{lem}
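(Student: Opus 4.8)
The plan is to exploit the fact that a matrix of tropical or fuzzy factorization rank one has a very rigid support, and that the band condition then confines this support to a small block. I would first record this structural observation, then deduce the locality statement (the second sentence of the lemma), and finally turn the whole question into a bounded-size feasibility problem.

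\textbf{Step 1 (support of a rank-one matrix is a rectangle).} A rank-one tropical factor can be written as $Q_{rc}=u_rv_c$ and a rank-one fuzzy factor as $Q_{rc}=\min(u_r,v_c)$, with nonnegative $u,v$. In either case $Q_{rc}\ne0$ holds exactly when $u_r\ne0$ and $v_c\ne0$, so the support $S(Q)$ equals the combinatorial rectangle $R\times C$ with $R=\{r:u_r\ne0\}$ and $C=\{c:v_c\ne0\}$. Because $Q\leqslant M$ forces $S(Q)\subseteq S(M)$ and $M$ is $k$-band, every $(r,c)\in R\times C$ satisfies $|r-c|\leqslant k$. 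Reading this off the corner pairs of the rectangle I get $\max C-\min R\leqslant k$ and $\max R-\min C\leqslant k$, whence $\max R\leqslant\min C+k\leqslant\min R+2k$ and likewise $\max C\leqslant\min C+2k$; that is, $R$ and $C$ each have diameter at most $2k$.

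\textbf{Step 2 (locality).} If $\alpha\ni(i,j)$ is admissible, let $Q$ witness it. Since $(i,j)\in S(M)$, the matching condition gives $Q_{ij}=M_{ij}\ne0$, so $i\in R$ and $j\in C$. Any matched position lies in $S(Q)=R\times C$, hence $\alpha\subseteq R\times C$. Combining this with the diameter bound of Step~1, every $(r,c)\in\alpha$ obeys $|r-i|\leqslant2k$ and $|c-j|\leqslant2k$; in particular the row indices (and the column indices) of any two entries of $\alpha$ differ by at most $2k$, which is the second assertion. Simultaneously this shows that every admissible $\alpha$ through $(i,j)$ sits inside the block $B=\{i-2k,\dots,i+2k\}\times\{j-2k,\dots,j+2k\}$, whose size depends only on $k$.

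\textbf{Step 3 (bounded feasibility test).} Since $\alpha\subseteq B$ and $|B|\leqslant(4k+1)^2$, there are at most $2^{(4k+1)^2}$ candidate sets, a number bounded in terms of $k$. It remains to decide, for a fixed candidate $\alpha\ni(i,j)$, whether some rank-one $Q\leqslant M$ matches $M$ on exactly $\alpha$. By Step~1 the search restricts to $u$ supported on $\{i-2k,\dots,i+2k\}$ and $v$ on $\{j-2k,\dots,j+2k\}$, so at most $2(4k+1)$ variables occur, all other coordinates of $u,v$ being zero (which keeps $Q=0\leqslant M$ off the block). For the tropical case I substitute $x_r=\log u_r$ and $y_c=\log v_c$ on the chosen supports $R,C$: the conditions $u_rv_c=M_{rc}$ on $\alpha$ and $u_rv_c<M_{rc}$ on the remaining support positions of $R\times C$ become linear equalities and strict linear inequalities in $x,y$, while a position with $M_{rc}=0$ inside $R\times C$ simply rejects that choice of $(R,C)$. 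Thus admissibility reduces to feasibility of a system of boundedly many linear (in)equalities, solvable in a number of operations depending only on $k$. For the fuzzy case the constraints $\min(u_r,v_c)=M_{rc}$ and $\min(u_r,v_c)\leqslant M_{rc}$ are disjunctions of linear constraints; branching on the ordering of $u_r$ and $v_c$ over the boundedly many pairs in $R\times C$ resolves each $\min$ and yields boundedly many linear feasibility problems of bounded size. Running these tests over all choices of $(R,C)$ and $\alpha$ enumerates exactly the admissible subsets through $(i,j)$ within a number of arithmetic operations bounded by a function of $k$.

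\textbf{Main obstacle.} The decisive point is Step~1: the fact that a dominated rank-one matrix has rectangular support, which the bandwidth then squeezes into a block of $k$-bounded size; once this is in hand, everything reduces to a fixed-dimensional feasibility problem. The parts that require care are encoding ``equality exactly on $\alpha$'' faithfully (the strict inequalities off $\alpha$, and the rejection of patterns $(R,C)$ meeting a zero entry of $M$) and handling the piecewise-linear $\min$ in the fuzzy case, where one must branch on the ordering of $u_r$ and $v_c$ but only over the boundedly many relevant pairs.
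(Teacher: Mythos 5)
Your proof is correct and follows the same route as the paper: the decisive observation in both is that a rank-one matrix $Q\leqslant M$ has rectangular support $R\times C$, which the $k$-band condition squeezes into a $(4k+1)\times(4k+1)$ block around $(i,j)$, reducing everything to a feasibility problem of size bounded by a function of $k$. The only difference is in how that bounded subproblem is dispatched: the paper invokes Renegar's quantifier elimination as a black box, whereas you reduce it explicitly to boundedly many linear feasibility systems (via logarithms in the tropical case and branching on the orderings inside $\min$ in the fuzzy case), which is a more elementary but equivalent finish.
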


\begin{proof}
Let $Q$ be a rank-one matrix satisfying $Q\leqslant M$. This implies $S(Q)=I'\times J'$ and $S(Q)\subset S(M)$, so since $M$ is a $k$-band matrix, we get $I'\subset[i-2k,i+2k]$ and $J'\subset[j-2k,j+2k]$. In other words, the problem reduces to a $(4k+1)\times(4k+1)$ submatrix of $M$, and it can be solved by the quantifier elimination techniques (see~\cite{Reneg3}) within the number of arithmetic operations bounded by a function of $k$.
\end{proof}

\begin{thr}\label{thrfast}
Let $k$ be a fixed integer. The fuzzy and tropical factorization ranks of a $k$-band $n\times n$ matrix can be computed in polynomial time.
\end{thr}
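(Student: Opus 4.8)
The plan is to reformulate the computation of the factorization rank as a covering problem and then solve that problem by invoking the bounded-bandwidth \textsc{hitting set} algorithm of Theorem~\ref{thrhsp}. The starting observation is that a tropical (respectively, fuzzy) factorization $M=B\odot C$ with inner dimension $r$ is nothing but a family of $r$ rank-one matrices $Q^{(1)},\ldots,Q^{(r)}$, each obtained from a column of $B$ and the matching row of $C$, whose entrywise maximum (the $\oplus$-sum) equals $M$. Since $M_{ij}=\max_\ell Q^{(\ell)}_{ij}$ forces $Q^{(\ell)}\leqslant M$ for every $\ell$, the set $\alpha_\ell$ of positions where $Q^{(\ell)}$ agrees with $M$ is precisely a $t$-admissible (respectively, $f$-admissible) subset, and the identity $M=\max_\ell Q^{(\ell)}$ is equivalent to saying that $\alpha_1,\ldots,\alpha_r$ together cover the support $S(M)$; positions outside $S(M)$ vanish in every $Q^{(\ell)}$ and need no covering. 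Conversely, any family of admissible subsets covering $S(M)$ produces, through the witnessing rank-one matrices guaranteed by their definition, a factorization of the corresponding rank. Hence the factorization rank of $M$ equals the minimum number of admissible subsets needed to cover $S(M)$.

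The second step is algorithmic. By Lemma~\ref{lemadm}, for each position $(i,j)\in S(M)$ one can list all admissible subsets containing it within a number of arithmetic operations depending only on $k$; running over the at most $(2k+1)n$ nonzero positions of the $k$-band matrix $M$ yields the complete list $\A$ of admissible subsets, of size $O(n)$ with the implied constant depending on $k$, in polynomial time. I then form the bipartite incidence graph $G=(\A,S(M),E)$ whose edges join each admissible subset to the positions it contains. A minimum cover of $S(M)$ by admissible subsets is exactly a minimum set of vertices in $\A$ dominating $S(M)$, so computing the factorization rank reduces to solving \textsc{hitting set} on $G$ and minimizing over the parameter; the latter minimization costs only a polynomial factor, obtained by trying the parameter values $1,2,\ldots$ or by binary search.

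The crucial point, and the step I expect to demand the most care, is to check that $G$ has bandwidth bounded by a function of $k$, so that Theorem~\ref{thrhsp} applies. I would order all vertices of $G$ along the main diagonal: a position $(i,j)$ receives the coordinate $i$, and an admissible subset $\alpha$ receives the smallest row coordinate occurring among its positions. The second assertion of Lemma~\ref{lemadm} guarantees that the row indices of any one admissible subset span an interval of length bounded in terms of $k$, so every edge joins a subset to a position whose coordinates differ by at most this spread bound. Moreover, because $M$ is $k$-band, each fixed row carries at most $2k+1$ positions, and the number of admissible subsets sharing a given minimal-row coordinate is likewise bounded in terms of $k$; refining the coordinate order into an integer labeling $\varphi$ therefore places only a bounded (in $k$) number of vertices at each coordinate. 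Combining these two facts forces $|\varphi(\alpha)-\varphi(i,j)|$ to be bounded by a function of $k$ on every edge, i.e. $G$ has bounded bandwidth, and Theorem~\ref{thrhsp} then solves each \textsc{hitting set} instance in polynomial time. The routine but delicate part is exactly this bookkeeping---bounding uniformly the number of vertices sharing a coordinate and verifying that interleaving positions and subsets along the diagonal keeps all edges short---after which no genuinely new idea beyond Theorem~\ref{thrhsp} is required.
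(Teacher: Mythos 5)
Your proposal is correct and follows essentially the same route as the paper: reformulate the rank as a minimum cover of the support by admissible sets (via Lemma~\ref{lemadm}), build the bipartite incidence graph, exhibit a labeling of bounded bandwidth using the bounded spread of indices within each admissible set, and invoke Theorem~\ref{thrhsp}. The only difference is cosmetic---the paper orders vertices by interleaving each support entry with the admissible sets containing it, while you order by row coordinate and refine---and both yield the required bandwidth bound.
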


\begin{proof}
The factorization rank is equal to the smallest $r$ for which a matrix can be written as a sum of $r$ matrices that have rank one with respect to a corresponding semiring. Since the sum of fuzzy or tropical matrices is their entrywise maximum, the factorization rank is equal to the smallest number of corresponding admissible sets covering all the non-zero entries of the matrix.

Now we describe a bipartite graph $G$ which is a reduction to Problem~\ref{prhs}. The $V_1$ part of vertices are the admissible sets computed as in Lemma~\ref{lemadm}, the $V_2$ part are the entries in the support of the matrix, and the pair of an entry and an admissible set is an edge if and only if the former belongs to the latter. Then we label the elements of $V_2=\{u_1,\ldots,u_m\}$ in lexicographical order and define the sequence
$$u_1,w_{11},\ldots,w_{c_11},u_2,w_{21},\ldots,w_{2c_2},\ldots,u_m,w_{m1},\ldots,w_{mc_m},$$
where $w_{it}$'s are the admissible sets containing $u_i$ taken in arbitrary order.
Now we define the function $\varphi:V_1\cup V_2\to\mathbb{N}$ sending a vertex of $G$ to the number of its first appearance in the above sequence. According to Lemma~\ref{lemadm}, $\{u_i,w_{jt}\}$ can be an edge only if $|\varphi(u_i)-\varphi(w_{jk})|$ is bounded by a function depending only on $k$. The application of Theorem~\ref{thrhsp} concludes the proof.
\end{proof}


\section{Nonnegative ranks of tridiagonal matrices}

Unfortunately, we cannot prove a complete analogue of Theorem~\ref{thrfast} for nonnegative and positive semidefinite ranks. In this section, we construct a fast algorithm computing the nonnegative rank of a tridiagonal matrix. We say that a matrix is a \textit{matrix unit} if it has exactly one non-zero entry; by $\operatorname{rank}_+(A)$ we denote the nonnegative rank of a nonnegative matrix $A$.

\begin{lem}\label{lemremove}
Define a block matrix
$$A=\left(\begin{array}{c|c}
B&C\\\hline
O&D
\end{array}\right),$$
where $B,C,D$ are nonnegative matrices, and $O$ is a zero matrix. Then $\operatorname{rank}_+(A)\geqslant \operatorname{rank}_+(B)+\operatorname{rank}_+(D)$. Further,

\noindent (1) $\operatorname{rank}_+(B|C)=\operatorname{rank}_+(B)$ implies $\operatorname{rank}_+(A)=\operatorname{rank}_+(B)+\operatorname{rank}_+(D)$, and

\noindent (2) if $C$ is a matrix unit and $\operatorname{rank}_+(B|C)=\operatorname{rank}_+(B)+1$, then $\operatorname{rank}_+(A)= \operatorname{rank}_+(B)+\operatorname{rank}_+\left(\begin{array}{c}
C\\\hline
D
\end{array}\right)$.
\end{lem}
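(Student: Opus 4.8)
The plan is to work throughout with the summand description of nonnegative rank: $\operatorname{rank}_+(M)$ is the least $k$ with $M=\sum_{l=1}^{k}u_lv_l^{T}$ and all $u_l,v_l$ entrywise nonnegative. The single principle I will lean on is that nonnegative summands cannot cancel, so whenever $M_{ij}=0$ every summand satisfies $(u_l)_i(v_l)_j=0$. Orient $A$ so that the rows of $B$ are on top and the rows of $D$ at the bottom, and the columns of $B$ on the left and those of $C,D$ on the right, so that $O$ occupies the bottom-left corner; applying the principle on $O$, in any factorization $A=\sum_l u_lv_l^{T}$ each summand either has $u_l$ vanishing on the bottom rows or has $v_l$ vanishing on the left columns. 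For the basic inequality $\operatorname{rank}_+(A)\geqslant\operatorname{rank}_+(B)+\operatorname{rank}_+(D)$ I fix an optimal factorization with $k=\operatorname{rank}_+(A)$ summands and let $\mathcal{B}$ be the set of indices $l$ with $v_l$ nonzero on the left columns (these have $u_l$ zero on the bottom rows), and $\mathcal{D}$ the complement. Restricting the $\mathcal{B}$-summands to the rows and columns of $B$ reproduces $B$, so $\operatorname{rank}_+(B)\leqslant|\mathcal{B}|$; symmetrically $\operatorname{rank}_+(D)\leqslant|\mathcal{D}|$, and $|\mathcal{B}|+|\mathcal{D}|=k$ gives the claim.

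The matching upper bounds come from gluing factorizations and padding with zeros. For (1) I place a factorization of the top block row $(B|C)$ in the top rows and a factorization of $D$ in the bottom rows and right columns; the bottom-left block stays zero because the first family vanishes on the bottom rows and the second on the left columns. Under the hypothesis of (1) the top family costs only $\operatorname{rank}_+(B)$ summands, matching the lower bound. The same device, now gluing a factorization of $B$ in the top-left corner to a factorization of $\binom{C}{D}$ placed in the right columns, shows unconditionally that $\operatorname{rank}_+(A)\leqslant\operatorname{rank}_+(B)+\operatorname{rank}_+\binom{C}{D}$, which is the easy half of (2).

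The crux is the lower bound in (2). I again fix an optimal factorization, keep $\mathcal{B},\mathcal{D}$ as above, and let $\mathcal{M}$ be the summands with $v_l$ nonzero on the columns of $C$; restricting to the right columns gives $\operatorname{rank}_+\binom{C}{D}\leqslant|\mathcal{M}|$. If $\mathcal{B}$ and $\mathcal{M}$ were disjoint I would be done, since then $k\geqslant|\mathcal{B}|+|\mathcal{M}|\geqslant\operatorname{rank}_+(B)+\operatorname{rank}_+\binom{C}{D}$. The matrix-unit hypothesis controls the overlap: writing $(i_0,j_0)$ for the unique nonzero position of $C$, no cancellation at the zero positions of $C$ forces every $\mathcal{M}$-summand to vanish on all top rows except $i_0$; a summand in $\mathcal{B}\cap\mathcal{M}$, being also zero on the bottom rows, then has $u_l$ a scalar multiple of the basis vector $e_{i_0}$, so any two such summands could be merged into a single rank-one term without changing $A$. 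Minimality therefore forces $|\mathcal{B}\cap\mathcal{M}|\leqslant1$.

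What remains is the off-by-one case $\mathcal{B}\cap\mathcal{M}=\{l^{*}\}$, where the estimate degrades to $k\geqslant|\mathcal{B}|+|\mathcal{M}|-1$, and this is exactly where the second hypothesis $\operatorname{rank}_+(B|C)=\operatorname{rank}_+(B)+1$ enters. The hard part is to recover the missing unit by showing $|\mathcal{B}|\geqslant\operatorname{rank}_+(B)+1$. I would redistribute the single entry $C_{i_0j_0}$: every summand other than $l^{*}$ contributing to $(i_0,j_0)$ lies in $\mathcal{D}$ and, by the matrix-unit principle, has $v_l$ supported on the single column $j_0$, so deleting its row-$i_0$ coordinate removes only its share of $C_{i_0j_0}$ and nothing else; rescaling $v_{l^{*}}$ on column $j_0$ to absorb the deficit keeps $A$ intact and makes $l^{*}$ carry the whole entry. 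After this modification only $l^{*}$ meets $C$, so the $\mathcal{B}$-summands restricted to the top rows reproduce $(B|C)$ exactly, giving $\operatorname{rank}_+(B|C)\leqslant|\mathcal{B}|$; the hypothesis then forces $|\mathcal{B}|\geqslant\operatorname{rank}_+(B)+1$, whence $k\geqslant|\mathcal{B}|+|\mathcal{M}|-1\geqslant\operatorname{rank}_+(B)+\operatorname{rank}_+\binom{C}{D}$, and with the upper bound this settles (2). The main obstacle, and the only step where both hypotheses are genuinely used, is precisely this redistribution, which converts one shared summand into an extra unit of $\operatorname{rank}_+(B|C)$.
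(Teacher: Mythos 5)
Your proposal is correct and follows essentially the same strategy as the paper: decompose $A$ into nonnegative rank-one summands, observe that each summand's support is a combinatorial rectangle that must avoid the zero block, and count the summands according to which blocks they touch. Your careful handling of the overlap $\mathcal{B}\cap\mathcal{M}$ in item (2) --- merging any two summands supported on row $i_0$ and redistributing the matrix-unit entry onto $l^{*}$ --- is exactly the justification that the paper compresses into its one-line ``without loss of generality'' assertion.
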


\begin{proof}
Let $\operatorname{rank}_+(A)=r$, and let $A=A_1+\ldots+A_r$ be a representation of $A$ as a sum of nonnegative rank one matrices. Clearly, no $A_i$ can have non-zero entries in both the $B$ and $D$ parts, which implies $\operatorname{rank}_+(A)\geqslant \operatorname{rank}_+(B)+\operatorname{rank}_+(D)$, and the item (1) does also follow. The condition as in item (2) allows us to assume without loss of generality that no $A_i$ has non-zero entries in the $B$ and $C$ parts at the same time, which implies the desired conclusion.
\end{proof}

\begin{cor}\label{cor0}
Let $A$ be a nonnegative matrix in which an $(i,j)$ entry is positive and all the other entries in the $j$th column are zero. Then the nonnegative rank of $A$ equals one plus the nonnegative rank of the matrix obtained from $A$ by removing the $i$th row and $j$th column.
\end{cor}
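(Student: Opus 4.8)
The plan is to obtain this as an immediate consequence of item~(1) of Lemma~\ref{lemremove}. Since the nonnegative rank is unchanged under permutations of rows and columns (a permutation merely reorders the rows of $B$ and the columns of $C$ in any factorization $A=B\odot C$), I may freely reorder the rows and columns of $A$ so as to bring it into the block shape used in Lemma~\ref{lemremove}.

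First I would move row $i$ to the top and column $j$ to the far left, so that $A$ acquires the block form of the lemma with $B=(A_{ij})$ a $1\times1$ block, $C$ the remaining entries of row $i$, the bottom-left block equal to the part of column $j$ lying in the other rows, and the bottom-right block $D$ equal to the matrix $A'$ obtained from $A$ by deleting row $i$ and column $j$. The crucial point is that, by hypothesis, every entry of column $j$ other than $(i,j)$ vanishes, so the bottom-left block is exactly the zero matrix $O$ required by the lemma. This is the one and only place where the special structure of the column enters.

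It then remains to verify the hypothesis of item~(1), namely $\operatorname{rank}_+(B|C)=\operatorname{rank}_+(B)$. Here $B=(A_{ij})$ is a single positive entry, hence $\operatorname{rank}_+(B)=1$, while $B|C$ is precisely row $i$ of $A$, a nonzero single-row matrix whose nonnegative rank is likewise $1$. Thus the hypothesis holds, and item~(1) gives $\operatorname{rank}_+(A)=\operatorname{rank}_+(B)+\operatorname{rank}_+(D)=1+\operatorname{rank}_+(A')$, which is the claim.

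I do not expect any genuine obstacle here: once the block decomposition is arranged with the zero block sitting beneath the pivot column, everything reduces to the two trivial rank computations above. The only subtlety worth flagging is the placement of the blocks---putting the distinguished column on the \emph{left} (rather than, say, isolating row $i$) is what forces the off-diagonal zero block to appear and lets the clean item~(1), rather than the more delicate item~(2), close the argument.
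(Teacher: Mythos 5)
Your derivation is correct and is exactly the intended one: the paper states this as an immediate corollary of Lemma~\ref{lemremove}, and applying item~(1) after permuting row $i$ to the top and column $j$ to the left, with $B=(A_{ij})$ a positive $1\times1$ block and the hypothesis guaranteeing the zero block below it, is the natural way to read it. Both trivial rank computations ($\operatorname{rank}_+(B)=\operatorname{rank}_+(B|C)=1$) are right, so nothing is missing.
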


In what follows, we say that the entries at the $(i,i+1)$ positions of an $n\times n$ matrix form its \textit{superdiagonal}, and the entries $(i+1,i)$ are its \textit{subdiagonal}. The following is a special case of the main result of this section.

\begin{lem}\label{lemnzd}
Let $A$ be an $n\times n$ nonnegative matrix whose subdiagonal and superdiagonal entries are all non-zero. Then the nonnegative rank of $A$ is either $n-1$ or $n$, and we can decide which is the case within $O(n)$ arithmetic operations.
\end{lem}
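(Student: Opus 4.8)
The plan is to settle the two easy bounds first and then reduce everything to deciding, in the singular case, between $n-1$ and $n$. For the lower bound I would look at the submatrix $A[\{1,\dots,n-1\},\{2,\dots,n\}]$, whose $(a,b)$ entry is $A_{a,b+1}$; because $A$ is tridiagonal this vanishes whenever $b>a$, so the submatrix is lower triangular with the \emph{nonzero} superdiagonal entries $A_{1,2},\dots,A_{n-1,n}$ on its diagonal. Hence it is nonsingular, $\operatorname{rank}(A)\geq n-1$, and therefore $\operatorname{rank}_+(A)\geq\operatorname{rank}(A)\geq n-1$. The upper bound $\operatorname{rank}_+(A)\leq n$ is immediate by writing $A$ as the sum of the $n$ nonnegative rank-one matrices $a_j e_j^{\top}$, where $a_j$ is its $j$th column. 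Computing $\det A$ by the three-term recurrence $f_0=1$, $f_1=A_{11}$, $f_i=A_{ii}f_{i-1}-A_{i-1,i}A_{i,i-1}f_{i-2}$ (so $\det A=f_n$) takes $O(n)$ steps; if $\det A\neq0$ then $\operatorname{rank}(A)=n$ and the answer is $n$. Thus the whole difficulty lies in the singular case $\det A=0$, where $\operatorname{rank}(A)=n-1$ and the answer is $n-1$ or $n$.

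In that case I would run a left-to-right reduction governed by the same quantities $f_i$ (equivalently $t_i=f_i/f_{i-1}$), using two moves. If $A_{11}=0$ then the first column is a matrix unit, so Corollary~\ref{cor0} peels it off; the first row then becomes a matrix unit and is removed by the transpose of Corollary~\ref{cor0}, deleting two indices and leaving the tridiagonal block $A[\{3,\dots,n\}]$, whose determinant one checks is again zero (expanding $\det A$ along the first row gives $\det A=-A_{1,2}A_{2,1}\det A[\{3,\dots,n\}]$). If instead $A_{11}>0$ and the leading minor $f_2=A_{11}A_{22}-A_{1,2}A_{2,1}$ is nonnegative, I would split off the nonnegative rank-one matrix supported on $\{1,2\}\times\{1,2\}$ that agrees with $A$ on the first row and column (this is the mechanism behind Lemma~\ref{lemremove}(2)); subtracting it clears row $1$ and column $1$ and replaces $A_{22}$ by $f_2/A_{11}\geq0$, leaving a smaller tridiagonal block with vanishing determinant, since one verifies its leading minors are $f'_j=f_{j+1}/A_{11}$. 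Iterating, the sign pattern of the sequence $f_i$ drives the recursion, vanishing minors triggering the peeling resets, and the whole sweep runs in $O(n)$ arithmetic operations.

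The heart of the matter is what happens at the first strictly negative quantity, i.e.\ when $A_{11}>0$ but $f_2<0$, and more generally when a leading minor turns negative while the previous one is positive. Here I would reformulate the question geometrically: $\operatorname{rank}_+(A)=n-1$ holds precisely when a simplicial cone with $n-1$ generators nests between the cone generated by the columns of $A$ and the outer cone $\R^{n}_{+}\cap\operatorname{colspace}(A)$. The two end columns of $A$ are extreme rays of this outer cone and are therefore forced to be generators of any such simplicial cone; a strictly negative leading minor forces a pair of interior columns onto opposite sides of the diagonal spanned by two forced extreme rays, which makes the simplicial nesting impossible and pins the rank at $n$. I expect this to be the main obstacle, and it has three parts that must be handled in full generality: proving that every legal reduction is exact in both directions; showing that the nonnegative and zero cases always assemble into an explicit factorization of size $n-1$ (the block factorization above, with peeling absorbing the degenerate zero minors); and showing that each strictly negative case genuinely produces the straddling obstruction and is not repaired later in the sweep. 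Managing the interaction of the vanishing-corner and vanishing-minor resets with this obstruction is where the careful case analysis lies.
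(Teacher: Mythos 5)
Your reduction sweep --- peel two indices via Corollary~\ref{cor0} when $A_{11}=0$, otherwise subtract the rank-one matrix $cr/A_{11}$ built from the first column $c$ and first row $r$ of $A$, which replaces $A_{22}$ by $f_2/A_{11}$ --- is exactly the paper's algorithm, and you correctly locate the crux: why a negative minor cannot be ``repaired'' by some other decomposition. But you leave that crux unproved. You explicitly defer ``proving that every legal reduction is exact in both directions'' and ``showing that each strictly negative case genuinely produces the straddling obstruction,'' proposing instead a nested-cone reformulation whose key claims (the end columns are forced extreme rays; a negative leading minor forces a straddling pair of interior columns) are asserted but not established. As written this is an outline of the right algorithm without its correctness proof, and the cone machinery you reach for is heavier than what is needed.

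The missing step has a two-sentence direct argument, which is what the paper supplies. Suppose $\operatorname{rank}_+(A)=n-1$ and write $A=A_1+\cdots+A_{n-1}$ with each $A_i$ nonnegative of rank one. Since $A_{31}=\cdots=A_{n1}=0$ and all summands are nonnegative, any $A_i$ with a nonzero entry in column $1$ must vanish identically on rows $3,\ldots,n$; but the submatrix of $A$ on rows $3,\ldots,n$ already has rank $n-2$ (its columns $2,\ldots,n-1$ form a triangular matrix whose diagonal consists of the nonzero subdiagonal entries), so at least $n-2$ of the $n-1$ summands are nonzero on those rows, hence at most one $A_i$ meets column $1$, and by symmetry at most one meets row $1$. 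Since $A_{11}>0$ these coincide, so exactly one summand has first row and first column equal to those of $A$, and a rank-one matrix with prescribed first row $r$ and first column $c$ and $(1,1)$ entry $A_{11}>0$ is uniquely $cr/A_{11}$. Therefore $\operatorname{rank}_+(A)=n-1$ if and only if $A-cr/A_{11}$ is entrywise nonnegative (equivalently $f_2\geqslant 0$) and has nonnegative rank $n-2$; the negative-minor case is then settled immediately, with no geometry and no worry about later repair. With this forcing lemma inserted, your sweep is correct, and your determinant shortcut is a harmless but unnecessary extra.
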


\begin{proof}
Since the subdiagonal is non-zero, the rank of $A$ is at least $n-1$, so $\operatorname{rank}_+(A)$ can be either $n-1$ or $n$.
If $A_{11}=0$, then we use Corollary~\ref{cor0} and conclude that $\operatorname{rank}_+(A)=2+\operatorname{rank}_+(A')$, where $A'$ is the matrix obtained from $A$ by removing the first two rows and columns, and its nonnegative rank can be computed by the recursive call of the algorithm being constructed.

Now assume $A_{11}>0$. We are going to look for a decomposition $A=A_1+\ldots+A_{n-1}$, where the $A_i$'s are nonnegative rank one matrices. Since $A_{31}=\ldots=A_{n1}=0$, any $A_i$ that has a non-zero in the first column must have zeros in rows indexed $3,\ldots,n$. Since the matrix formed by these rows has rank $n-2$, we see that at most one $A_i$ can have non-zeros in the first column, and, by symmetry, at most one $A_i$ can have non-zeros in the first row. Therefore, there should be an $A_i$ whose first row and column coincide with those of $A$; we have $\operatorname{rank}_+(A)=n-1$ if and only if $A-A_i$ is a nonnegative matrix which satisfies $\operatorname{rank}_+(A-A_i)=n-2$. It remains to check the latter condition with another recursive call.
\end{proof}

\begin{thr}
The nonnegative rank of a tridiagonal $n\times n$ matrix can be computed within $O(n)$ arithmetic operations.
\end{thr}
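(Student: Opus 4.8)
The plan is to reduce an arbitrary nonnegative tridiagonal matrix to the situation of Lemma~\ref{lemnzd} by repeatedly peeling off structure at the places where the off-diagonals vanish, and then to keep the whole procedure linear by an amortized count. Throughout I will use that the nonnegative rank is unchanged by deleting zero rows and columns and is invariant under transposition, so that any reduction proved for a vanishing subdiagonal entry transfers verbatim to a vanishing superdiagonal entry.

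First I would set up the recursion. If every subdiagonal and superdiagonal entry is non-zero, Lemma~\ref{lemnzd} already settles the matrix in $O(n)$ operations, so this is the base case. Otherwise some off-diagonal entry vanishes. If some column or row carries a single non-zero entry---which happens at the boundary, or whenever two of the three band entries in a line are zero---Corollary~\ref{cor0} removes one row and one column at the cost of exactly one unit of rank, and the remainder is again tridiagonal. If instead a subdiagonal entry $A_{i+1,i}$ vanishes, the tridiagonal pattern forces the block decomposition
$$A=\left(\begin{array}{c|c} B&C\\\hline O&D\end{array}\right),$$
where $B$ and $D$ are tridiagonal and the upper-right block $C$ can be supported only at the single position $(i,i+1)$; thus $C$ is either zero or a matrix unit, exactly matching the hypotheses of Lemma~\ref{lemremove}. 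A vanishing superdiagonal entry is treated identically after transposing. In this way every instance is driven either to a direct reduction or to a split into strictly smaller tridiagonal blocks.

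The delicate point, and the step I expect to be the main obstacle, is the matrix-unit case of Lemma~\ref{lemremove}: one must decide whether $\operatorname{rank}_+(B|C)$ equals $\operatorname{rank}_+(B)$ or $\operatorname{rank}_+(B)+1$, since the two alternatives lead to the genuinely different conclusions
$$\operatorname{rank}_+(A)=\operatorname{rank}_+(B)+\operatorname{rank}_+(D)\quad\text{or}\quad\operatorname{rank}_+(A)=\operatorname{rank}_+(B)+\operatorname{rank}_+\binom{C}{D}.$$
To resolve this I would apply Corollary~\ref{cor0} to the appended unit column of $B|C$, whose only non-zero entry sits in the last row of $B$; this yields $\operatorname{rank}_+(B|C)=1+\operatorname{rank}_+(B^-)$, where $B^-$ is $B$ with its last row deleted. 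Hence the required test is precisely the comparison of $\operatorname{rank}_+(B)$ with $\operatorname{rank}_+(B^-)$, both being nonnegative ranks of (reductions of) tridiagonal matrices and therefore accessible to the recursion. The matrix $\binom{C}{D}$ of the second alternative is, after discarding zero rows, just $D$ with an extra matrix-unit row, which Corollary~\ref{cor0} again reduces; so the recursion never leaves the class it started in.

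Finally I would argue the $O(n)$ bound. Each reduction either deletes a line or cuts the matrix into two tridiagonal blocks whose sizes sum to that of the original, and the local decision at a cut costs $O(1)$ arithmetic on the two boundary band entries, while the base case is linear by Lemma~\ref{lemnzd}. The only threat to linearity is the auxiliary value $\operatorname{rank}_+(B^-)$ demanded by the matrix-unit test, which a naive implementation would recompute from scratch; I would neutralize this by strengthening the recursive state so that the algorithm returns, alongside the nonnegative rank of each block, the nonnegative rank of that block with its boundary row (respectively column) removed. With this invariant every band entry is inspected a bounded number of times, and the total cost is $O(n)$.
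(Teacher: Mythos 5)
Your proposal is correct and follows essentially the same route as the paper: split the matrix into irreducible tridiagonal blocks at vanishing off-diagonal entries, apply Lemma~\ref{lemnzd} to each such block, combine via the two cases of Lemma~\ref{lemremove}, and dispose of the leftover non-square pieces by chains of Corollary~\ref{cor0} reductions. Your test distinguishing the two cases of Lemma~\ref{lemremove} --- comparing $\operatorname{rank}_+(B)$ with $\operatorname{rank}_+(B^-)$ via Corollary~\ref{cor0} --- is a slightly more explicit justification of a step the paper states without detail (there, since the block has non-zero superdiagonal, $\operatorname{rank}_+(B^-)=n_1-1$ automatically, so the dichotomy reduces to whether $\operatorname{rank}_+(D_1)$ is $n_1$ or $n_1-1$), and your augmented-return-value bookkeeping plays the role of the paper's left-to-right sweep.
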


\begin{proof}
Such a matrix can be presented in block-diagonal form as
$$A=\left(\begin{array}{c|c|c|c|c}
D_1&U_1&O&\ldots&O\\\hline
V_1&D_2&U_2&\ddots&\vdots\\\hline
O&V_2&\ddots&\ddots&O\\\hline
\vdots&\ddots&\ddots&D_{k-1}&U_{k-1}\\\hline
O&\ldots&O&V_{k-1}&D_k
\end{array}\right),$$
where the $D_i$'s are tridiagonal $n_i\times n_i$ matrices with non-zero subdiagonals and non-zero superdiagonals, the $O$'s are zero blocks, every $U_i$ (respectively, $V_j$) is either a zero matrix or a bottom-left (respectively, top-right) matrix unit, and, for every $i$, either $U_i$ or $V_i$ is a zero matrix.

Assuming that $V_1$ is zero (otherwise we consider $A^\top$ instead of $A$), we employ the algorithm as in Lemma~\ref{lemnzd} to the $D_1$ block. If this block has full nonnegative rank, that is, $\operatorname{rank}_+(D_1)=n_1$, then we use the item (1) of Lemma~\ref{lemremove} and get $\operatorname{rank}_+(A)=n_1+\operatorname{rank}_+(A')$, where $A'$ is the matrix obtained from $A$ by removing the first $n_1$ rows and $n_1$ columns. In particular, we can compute $\operatorname{rank}_+(A')$ recursively, so we are done.

If $\operatorname{rank}_+(D_1)=n_1-1$, then we are under the assumptions of the item (2) of Lemma~\ref{lemremove}. We get
$\operatorname{rank}_+(A)=n_1-1+\operatorname{rank}_+(A'')$,
where $A''$ is the matrix obtained from $A$ by removing the first $n_1$ columns. Denoting by $t$ the smallest index for which $U_t$ is zero (or taking $t=k$ if all the $U_i$'s are zero), we apply Corollary~\ref{cor0} and get
$$\operatorname{rank}_+(A)=n_1+\ldots+n_t-1+\operatorname{rank}_+\left(\overline{A}\right),$$
where $\overline{A}$ is the matrix obtained from $A$ by removing the first $n_1+\ldots+n_t$ rows and $n_1+\ldots+n_t$ columns. Finally, we compute $\operatorname{rank}_+\left(\overline{A}\right)$ recursively.
\end{proof}

For $k\geqslant2$, we do not know if the nonnegative rank of a $k$-band matrix can be computed in polynomial time. In fact, the positive semidefinite rank looks even harder to deal with, and we did not manage to compute it even for several specific tridiagonal matrices.

\begin{prob}\label{probnonpen}
Is there a polynomial time algorithm computing the nonnegative rank of a pentadiagonal matrix?
\end{prob}

\begin{prob}\label{probpsdtri}
Is there a polynomial time algorithm computing the positive semidefinite rank of a tridiagonal matrix?
\end{prob}

\begin{prob}\label{probpsdtoep}
What is the positive semidefinite rank of the $n\times n$ matrix $A$ with ones on the diagonal and superdiagonal, with $a$'s on the subdiagonal, and with zeros everywhere else? 
\end{prob}

Problem~\ref{probpsdtri} can be related to the question asked in~\cite{psdrank}. Namely, is it NP-hard to decide if the positive semidefinite rank of a $n\times n$ matrix equals $n$? Because of the lack of efficient methods for computing this function, a consideration of particular instances like that in Problem~\ref{probpsdtoep} may be helpful in studying these questions.

\section{Acknowldgement}

This research was financially supported by the Russian Science Foundation (project No. 17-71-10229).

\section{References}

\end{document}